\newcommand{\comments}[1]{}
\numberwithin{equation}{section} 
\definecolor{dark-gray}{gray}{0.3}
\definecolor{dkgray}{rgb}{.4,.4,.4}
\definecolor{dkblue}{rgb}{0,0,.5}
\definecolor{medblue}{rgb}{0,0,.75}
\definecolor{rust}{rgb}{0.5,0.1,0.1}
\newtheoremstyle{myThm} 
    {\topsep}                    
    {\topsep}                    
    {\itshape}                   
    {}                           
    {\sffamily\bfseries}                   
    {.}                          
    {.5em}                       
    {}  
\newtheoremstyle{myRem} 
    {\topsep}                    
    {\topsep}                    
    {}                   
    {}                           
    {\sffamily}                   
    {.}                          
    {.5em}                       
    {}  
\newtheoremstyle{myDef} 
    {\topsep}                    
    {\topsep}                    
    {}                   
    {}                           
    {\sffamily\bfseries}                   
    {.}                          
    {.5em}                       
    {}  
\theoremstyle{myThm}
\newtheorem{theorem}{Theorem}[section]
\newtheorem{proposition}[theorem]{Proposition}
\newtheorem{corollary}[theorem]{Corollary}
\theoremstyle{myRem}
 \newenvironment{remark}
  {\pushQED{\qed}\remarkx}
  {\popQED\endremarkx}
\theoremstyle{myDef}
\let\originalleft\left
\let\originalright\right
\renewcommand{\left}{\mathopen{}\mathclose\bgroup\originalleft}
\renewcommand{\right}{\aftergroup\egroup\originalright}
\definecolor{mygreen}{rgb}{0.1,0.75,0.2}
\providecommand{\mathbbm}{\mathbb} 
\newcommand{\R}{\mathbbm{R}}
\newcommand{\F}{\mathcal{F}}
\renewcommand{\phi}{\varphi}
\newcommand{\ttrace}{\text{Tr}}
\newcommand{\E}{\mathbb{E}}
\newcommand{\hatSigma}{\widehat{\Sigma}}
\newcommand{\mcE}{\mathcal{E}}
\newcommand{\mcN}{\mathcal{N}}
\newcommand{\mcS}{\mathcal{S}}
\newcommand{\normn}[1]{\ensuremath{\lVert #1 \rVert}}
\newcommand{\inp}[1]{\ensuremath{\left\langle #1 \right\rangle}}
\definecolor{mygreen}{rgb}{0.1,0.75,0.2}
\title{On the Estimation of Gaussian Moment Tensors} 
\author{
Omar Al-Ghattas\textsuperscript{*} \ \ 
Jiaheng Chen\textsuperscript{\dag} \ \
Daniel Sanz-Alonso\textsuperscript{\ddag}
}
\makeatletter\@addtoreset{section}{part}\makeatother%
\newcommand{\upperRomannumeral}[1]{\uppercase\expandafter{\romannumeral#1}}
\date{}
\begin{document}

\maketitle 
\renewcommand{\thefootnote}{\fnsymbol{footnote}}

\footnotetext[1]{Eric and Wendy Schmidt Center, Broad Institute of MIT and Harvard}
\footnotetext[2]{Committee on Computational and Applied Mathematics, University of Chicago}
\footnotetext[3]{Department of Statistics, University of Chicago}


\vspace{-4em}
\abstract{This paper studies two estimators for Gaussian moment tensors: the standard sample moment estimator and a plug-in estimator based on Isserlis's theorem. We establish dimension-free, non-asymptotic error bounds that  demonstrate and quantify the advantage of Isserlis’s estimator for tensors of even order $p  >2.$ Our bounds hold in operator and entrywise maximum norms, and apply to symmetric and asymmetric tensors.} 



\vspace{1em}
\section{Introduction}\label{sec:Introduction}
This paper develops dimension-free, non-asymptotic theory for estimating Gaussian moment tensors of arbitrary order. We study two estimators: the sample moment estimator and a plug-in estimator based on Isserlis's formula. Our non-asymptotic bounds quantify the advantage of Isserlis's estimator for tensors of even order $p>2,$ both in operator and entrywise maximum norms. By studying the sample moment estimator, this work contributes to the emerging literature on concentration inequalities for random tensors \cite{tomioka2014spectral,vershynin2020concentration,zhou2021sparse,zhivotovskiy2024dimension, bandeira2024geometric, boedihardjo2024injective, al2025covariance,chen2025sharp}; and by studying Isserlis's estimator, it contributes to the literature on estimating functionals of high and infinite-dimensional parameters \cite{koltchinskii2018asymptotic,koltchinskii2021asymptotically,koltchinskii2021estimation, koltchinskii2025estimation}. 
More broadly, this work is motivated by the growing adoption of tensor methods in statistics and machine learning \cite{mccullagh2018tensor,lim2021tensors, Ballard_Kolda_2025, bi2021tensors, auddy2024tensors, cichocki2015tensor, sidiropoulos2017tensor},
which calls for new theory for high-dimensional tensor estimation.  

The concentration of the sample covariance (the sample moment tensor of order $p=2$) has been extensively studied. Most pertinent to our work, the papers \cite{vershynin2010introduction,koltchinskii2017concentration} identify a notion of \emph{effective dimension} that determines the sample complexity. In contrast to our results for even order $p>2,$ in the case $p=2$ the sample covariance matrix is known to be minimax optimal under the operator norm (see \cite[Theorem 2]{lounici2014high} and \cite[Theorem 4]{koltchinskii2017concentration}). Sharp operator norm concentration inequalities for simple random tensors of arbitrary order were recently established in \cite{al2025sharp,chen2025sharp} using the effective dimension from \cite{vershynin2010introduction,koltchinskii2017concentration}. In this paper, we prove a new sharp concentration inequality in entrywise maximum norm using a notion of effective dimension introduced in \cite{al2024optimal, al2025covariance} to analyze thresholding estimators for structured covariance operators. 

Isserlis's theorem expresses higher-order moments of a Gaussian distribution as functionals of its covariance. Consequently, our problem falls within the general framework of estimating functionals $f(\Sigma)$ of a covariance operator $\Sigma$ \cite{koltchinskii2018asymptotic,koltchinskii2021asymptotically}, where  plug-in estimators $f(\widehat{\Sigma})$ based on the sample covariance $\widehat{\Sigma}$ are often severely biased and suboptimal \cite{koltchinskii2021estimation, koltchinskii2025estimation}. Bias reduction techniques ---such as jackknife, bootstrap, and Taylor expansions--- are commonly used to improve efficiency.  
Our new dimension-free, non-asymptotic error bounds for Isserlis's estimator show that, in the context of estimating Gaussian moment tensors, plug-in estimation via Isserlis's formula performs well and strictly outperforms the sample moment estimator.

\section{Preliminaries}

Given positive integers $d_1,\dots, d_p$ and tensors $T,T' \in \R^{d_1 \times \cdots \times d_p}$, their Frobenius inner product is defined by
\begin{align*}
    \inp{T,T'} := 
    \sum_{i_1=1}^{d_1} \sum_{i_2=1}^{d_2}\cdots 
    \sum_{i_p=1}^{d_p}
    T_{i_1,\dots, i_p}T'_{i_1,\dots, i_p}.
\end{align*}
For vectors  $v_k \in \R^{d_k},$  $1\le k \le p,$ let $v_1 \otimes \cdots \otimes v_p \in \R^{d_1  \times \cdots \times d_p}$ denote their outer product, defined entrywise by
\begin{align*}
  (v_1 \otimes \cdots \otimes v_p)_{\ell_1,\dots, \ell_p} := v_{1, \ell_1} \cdots v_{p, \ell_p},  
\end{align*}
where $v_{i,\ell}$ denotes the $\ell$-th coordinate of $v_i.$ The operator norm of $T \in \R^{d_1 \times \cdots \times d_p}$ is
\begin{align*}
    \|T\| := 
    \sup_{v_k \in \mcS^{d_k}, 1\le k \le p 
    } \inp{T, v_1 \otimes \cdots \otimes v_p},
\end{align*}
where $\mcS^{d_k}:= \{ v \in \R^{d_k} : |v| = 1 \}$ denotes the unit Euclidean sphere in $\R^{d_k}.$  Similarly, the entrywise maximum norm is defined by
\begin{align*}
\|T\|_{\max} := \sup_{v_k \in \mcE^{d_k}, 1 \le k \le p}  |\inp{T, v_1 \otimes \cdots \otimes v_p}|,
\end{align*}
where $\mcE^{d_k} := \{e_i\}_{i=1}^{d_k}$ is the standard basis of $\R^{d_k}.$ 

We will consider two notions of effective dimension of a covariance matrix $\Sigma \in \R^{d\times d}.$ The first is 
\begin{align*}
r_2(\Sigma) := \frac{\ttrace(\Sigma)}{\|\Sigma\|},
\end{align*}
which will arise in our bounds in operator norm (see e.g. \cite{koltchinskii2017concentration, vershynin2010introduction}). The second is
\begin{align*}
r_{\max}(\Sigma) := \frac{\left( \E_{X \sim \mcN(0, \Sigma)} \|X\|_\infty \right)^2}{\|\Sigma\|_{\max}},
\end{align*}
which arises in our bounds in entrywise maximum norm (see e.g. \cite{ghattas2022non, al2024optimal, al2025covariance}).

For positive sequences $\{a_n\}, \{b_n\}$, we write $a_n \lesssim b_n$ to denote that, for some constant $c >0$, $a_n \le c b_n.$ If both $a_n \lesssim b_n$ and $b_n \lesssim a_n$ hold, we write $a_n \asymp b_n.$
We write $\lesssim_p$ and $\asymp_p$ to indicate that the implicit constant may depend on the parameter $p$.

\section{Main results}
This section contains our main results. In Subsection~\ref{ssec:symm}, we study the \emph{symmetric case}: moment tensors defined by expectation of the $p$-fold outer product of a single random vector. In Subsection~\ref{ssec:Asymm}, we consider the \emph{asymmetric case}: moment tensors defined by expectation of the outer product of $p$ random vectors of possibly different dimensions. 
While the asymmetric case subsumes the symmetric one, we present first the symmetric case due to its simplicity and its  central role in theory and applications.

\subsection{Symmetric case} \label{ssec:symm}
Let $X \sim \mcN(0, \Sigma)$ be a zero-mean Gaussian random vector in $\R^d$ with covariance matrix $\Sigma,$  and let $X_1, \ldots, X_N$ be i.i.d. copies of $X$. Let $p$ be an even integer. Our goal is to estimate the  $p$-th  order moment tensor
\begin{align}\label{eq:T}
    T :=  \E X^{\otimes p} := \E X \otimes \cdots \otimes X,
\end{align}
which is the expectation of the $p$-fold tensor product of $X$.

A natural estimator is the sample moment estimator $\widehat{T}_S$, defined by
\begin{align}\label{eq:T_S}
\widehat{T}_S:=\frac{1}{N} \sum_{i=1}^N X_i^{\otimes p}.
\end{align}
Entrywise, this corresponds to
\begin{align*}
\big(\widehat{T}_S\big)_{\ell_1, \ldots, \ell_p} 
:= \frac{1}{N} \sum_{i=1}^N X_{i, \ell_1} \cdots X_{i, \ell_p}.
\end{align*}
Alternatively,
Isserlis's theorem \cite{isserlis1918formula} (also known as Wick's probability theorem \cite{wick1950evaluation}) provides an exact expression for Gaussian moment tensors in terms of second-order moments. Specifically, for any multi-index $(\ell_1,\dots, \ell_p),$
\begin{align*}
(\E X^{\otimes p})_{\ell_1, \dots, \ell_p} 
=
\E[X_{\ell_1} \cdots X_{\ell_p}] = 
\sum_{\pi \in \Pi^2_p} \prod_{(j,k) \in \pi} \Sigma_{\ell_j, \ell_k},
\end{align*}
where $\Pi^2_p$ is the set of all pairwise partitions of $\{1, \dots, p\}$. This motivates Isserlis's estimator, which substitutes the sample covariance \begin{align*}
\hatSigma := \frac{1}{N} \sum_{i=1}^N X_i X_i^\top
\end{align*}
into the same expression.  The resulting estimator is given entrywise by 
\begin{align}\label{eq:T_I}
\big(\widehat{T}_I\big)_{\ell_1, \ldots, \ell_p} := 
\sum_{\pi \in \Pi^2_p} \prod_{(j,k) \in \pi}
\hatSigma_{\ell_j, \ell_k}.
\end{align}
Using the notion of induced likelihood \cite{zehna1966invariance},
the plug-in estimator $\widehat{T}_I$ can be interpreted as the maximum likelihood estimator of $T.$

Theorem \ref{thm:IsserlissBest} compares the performance of the two estimators, $\widehat{T}_{S}$ and $\widehat{T}_I$, under both the operator norm and the entrywise maximum norm.

\begin{theorem}\label{thm:IsserlissBest}
Let $X \sim \mcN(0, \Sigma)$ be a zero-mean Gaussian random vector in $\R^d$ with covariance matrix $\Sigma,$  and let $X_1, \ldots, X_N$ be i.i.d. copies of $X$. For any even integer $p$, let $T, \widehat{T}_{S},$ and $ \widehat{T}_{I}$ be as defined in equations \eqref{eq:T}, \eqref{eq:T_S}, and \eqref{eq:T_I}, respectively. Then the following bounds hold.

    \smallskip
    \noindent    (i) Operator norm bounds:
    \begin{align*}
     \E \normn{\widehat{T}_S- T} 
     &\asymp_p \|\Sigma\|^{p/2} \bigg(\sqrt{\frac{r_2(\Sigma)}{N}}+\frac{r_2(\Sigma)^{p/2}}{N}\bigg),\\
     \E \normn{ \widehat{T}_I- T} 
     &\lesssim_p \|\Sigma\|^{p/2} \bigg(\sqrt{\frac{r_2(\Sigma)}{N}}+\bigg(\frac{r_2(\Sigma)}{N}\bigg)^{p/2}\bigg).
    \end{align*}
    \noindent    (ii) Entrywise maximum norm bounds:
    \begin{align*}
     \E \normn{ \widehat{T}_S- T
     }_{\max}
     &\asymp_p \|\Sigma\|_{\max}^{p/2}\bigg(\sqrt{\frac{r_{\max}(\Sigma)}{N}}+\frac{r_{\max}(\Sigma)^{p/2}}{N}\bigg),\\
     \E \normn{\widehat{T}_I - T 
     }_{\max}
     &\lesssim_p 
     \|\Sigma\|_{\max}^{p/2} \bigg(\sqrt{\frac{r_{\max}(\Sigma)}{N}}+\bigg(\frac{r_{\max}(\Sigma)}{N}\bigg)^{p/2}\bigg).
    \end{align*}
\end{theorem}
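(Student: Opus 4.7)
The plan is to treat the two estimators separately with different tools. For $\widehat T_S$, an empirical mean of i.i.d.\ simple tensors, I would combine sharp tensor concentration inequalities with matching variance lower bounds. For $\widehat T_I$, a degree-$p/2$ polynomial in $\hatSigma$, I would expand around $\Sigma$ and reduce to moment bounds on $\hatSigma - \Sigma$.

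For the upper bound on $\widehat T_S$ in operator norm, I would invoke the sharp operator-norm concentration inequality for averages of simple random tensors proved in \cite{al2025sharp, chen2025sharp}, whose effective dimension is precisely $r_2(\Sigma)$ and which yields the stated form $\|\Sigma\|^{p/2}(\sqrt{r_2/N} + r_2^{p/2}/N)$. The matching lower bounds have two components: the $\sqrt{r_2/N}$ part follows from a variance lower bound, obtained by testing $\widehat T_S - T$ against $v_1^{\otimes p}$ for a top unit eigenvector $v_1$ of $\Sigma$, which reduces to computing $\mathrm{Var}(\langle X, v_1\rangle^p)$ for a single Gaussian; the $r_2^{p/2}/N$ part comes from the typical size of a single sample's contribution $\|X_1^{\otimes p}\|/N = \|X_1\|^p/N$, since $\E\|X_1\|^p \asymp_p (\mathrm{tr}\,\Sigma)^{p/2}$. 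For the entrywise maximum norm, I would establish a new Bernstein-type concentration inequality for $\widehat T_S$: each entry is an average of i.i.d.\ products of $p$ centered Gaussian coordinates, hence sub-Weibull of index $2/p$. A union bound over the $d^p$ multi-indices, combined with moment comparisons for $\|X\|_\infty$, converts the nominal $\log d$ factors into the dimension-free effective dimension $r_{\max}(\Sigma)$.

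For $\widehat T_I$, set $E = \hatSigma - \Sigma$. Each summand of Isserlis's formula is a contraction of $p/2$ copies of $\hatSigma$ following the pairing pattern $\pi$. Multilinear expansion of $\hatSigma = \Sigma + E$ produces $2^{p/2}$ tensors per partition $\pi \in \Pi_p^2$; the all-$\Sigma$ tensor reproduces the $\pi$-summand of $T$ and cancels, while each remaining tensor contains $k\ge 1$ factors of $E$ and $p/2 - k$ factors of $\Sigma$. Using submultiplicativity of the operator (and max) norm under tensor products, together with $|\Pi_p^2| = (p-1)!!$, one obtains
\begin{align*}
\E \|\widehat T_I - T\| \lesssim_p \sum_{k=1}^{p/2} \|\Sigma\|^{p/2-k}\, \E\|E\|^k.
\end{align*}
The classical moment bound from \cite{koltchinskii2017concentration} gives $(\E\|E\|^k)^{1/k} \lesssim_k \|\Sigma\|(\sqrt{r_2/N} + r_2/N)$. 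Substituting, the $k=1$ contribution produces $\sqrt{r_2/N}$, the $k=p/2$ contribution produces $(r_2/N)^{p/2}$, and intermediate $k$ are dominated by these endpoints after a case split on whether $r_2 \le N$. The max-norm bound follows identically, substituting instead the entrywise moment bound $(\E\|E\|_{\max}^k)^{1/k} \lesssim_k \|\Sigma\|_{\max}(\sqrt{r_{\max}/N} + r_{\max}/N)$, derived via entrywise Bernstein plus a union bound over $d^2$ coordinate pairs.

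The main obstacle is proving the new entrywise maximum-norm concentration inequality for $\widehat T_S$ in a dimension-free form governed by $r_{\max}(\Sigma)$ rather than $\log d$; this requires the machinery from \cite{al2024optimal, al2025covariance}. A secondary but non-trivial step is producing tight lower bounds matching both scaling regimes of $\widehat T_S$ simultaneously, in order to deliver the $\asymp_p$ statement. Verifying submultiplicativity of the operator norm under the permuted tensor products induced by each pairing $\pi$ is routine but must be done carefully for each index pattern.
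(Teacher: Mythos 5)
Your treatment of $\widehat T_I$, and of $\widehat T_S$ in operator norm, matches the paper's. The paper likewise cites the sharp bound of \cite{al2025sharp} for the sample moment tensor, and controls $\widehat T_I$ by a deterministic perturbation inequality (Proposition~\ref{prop:perturbation} and Corollary~\ref{coro:perturbation}, proved via a telescoping product identity that is equivalent to your multilinear expansion in $E=\hatSigma-\Sigma$), followed by the Koltchinskii--Lounici bound for $\|\hatSigma-\Sigma\|$. Your insistence on using $k$-th moment bounds $\E\|E\|^k$ before summing over $k$ is, if anything, slightly more careful than the paper's one-line substitution of the first-moment bound into a degree-$p/2$ polynomial, and your case split to absorb the intermediate powers is correct.

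The genuine gap is in the entrywise maximum norm upper bound for $\widehat T_S$. A union bound over the $d^p$ multi-indices with sub-Weibull tails of index $2/p$ yields a bound governed by $\log(d^p)\asymp_p \log d$, and no ``moment comparison for $\|X\|_\infty$'' can convert $\log d$ into $r_{\max}(\Sigma)$: one always has $r_{\max}(\Sigma)\lesssim \log d$, and the gap can be arbitrarily large (e.g.\ when the diagonal of $\Sigma$ decays, $\E\|X\|_\infty$ remains of order $\|\Sigma\|_{\max}^{1/2}$ while $\log d\to\infty$), so the union-bound route produces a strictly weaker, dimension-dependent estimate --- precisely the ``dimension-dependent counterpart'' that the paper relegates to a remark after Theorem~\ref{thm:maximum_asymmetric}. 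The paper's actual argument rewrites the max norm as the supremum of a product empirical process over the finite class $\{\langle\cdot,v\rangle: v\in\mcE\cup(-\mcE)\}$, applies the generic-chaining bound of \cite[Theorem 2.2]{chen2025sharp}, and invokes Talagrand's majorizing measure theorem to identify $\gamma(\cdot,\psi_2)\asymp\E\|X^{(k)}\|_\infty$; this is the only place $r_{\max}$ can enter. You correctly flag this step as the main obstacle, but the mechanism you propose for it would fail: what is needed is chaining with respect to the intrinsic $L_2$ metric on the index set, not a cardinality-based union bound. The matching lower bounds for $\widehat T_S$ are handled in the paper essentially as you outline, by adapting the arguments of \cite{al2025sharp,chen2025sharp}, so there is no issue there.
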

\begin{proof}

    \textit{(i) Operator norm bounds.} The sample moment estimator bound follows by \cite[Theorem 2.1]{al2025sharp} (see also \cite{abdalla2025dimension}). For $\widehat{T}_I$, by Corollary~\ref{coro:perturbation}, we have
    \begin{align*}
    \frac{\|\widehat{T}_I-T\|}{\|T\|}
    \le \frac{p}{2}\cdot\frac{\|\widehat{\Sigma}-\Sigma\|}{\|\Sigma\|} \bigg(1+\frac{\|\widehat{\Sigma}-\Sigma\|}{\|\Sigma\|}\bigg)^{p/2-1}
    \lesssim_p  
    \frac{\|\widehat{\Sigma}-\Sigma\|}{\|\Sigma\|}+\bigg(\frac{\|\widehat{\Sigma}-\Sigma\|}{\|\Sigma\|}\bigg)^{p/2}.
    \end{align*}
    Further, by \cite[Theorem 4]{koltchinskii2017concentration}, $\E \| \hatSigma - \Sigma\|/\|\Sigma\| \asymp \sqrt{\frac{r_2(\Sigma)}{N}} + \frac{r_2(\Sigma)}{N},$ which can be plugged into the expression in the previous display to give the result.
    
    \smallskip
    
   \noindent \textit{(ii) Entrywise maximum norm bounds.} For the sample moment estimator, the upper bound is a direct corollary of Theorem~\ref{thm:IsserlissBest_asymmetry}. The lower bound follows by an analogous argument to the one used in the proof of \cite[Theorem 2.1]{al2025sharp} and is omitted for brevity. For Isserlis's estimator, the bound follows from Corollary~\ref{coro:perturbation}.
\end{proof}

\begin{remark}
    Theorem~\ref{thm:IsserlissBest} shows that consistent estimation of $T$ using the sample moment estimator $\widehat{T}_{S}$ requires a sample size  $N\gg (r_2(\Sigma))^{p/2}$ under the operator norm, or $N\gg (r_{\max}(\Sigma))^{p/2}$ under the entrywise maximum norm. In contrast, consistency of Isserlis's estimator $\widehat{T}_I$ only requires sample size $N\gg r_2(\Sigma)$ or $N\gg r_{\max}(\Sigma)$, leading to a significant reduction in sample complexity.
\end{remark}

\begin{remark}
    The upper bounds for the sample moment tensor in Theorem \ref{thm:IsserlissBest} hold for sub-Gaussian data. In contrast, our analysis of the Isserlis's estimator is limited to Gaussian data. An interesting question beyond the scope of this work is to leverage the generalization of Isserlis's theorem for isotropic distributions in \cite[Theorem 3]{munthe2025short} to define and analyze more general Isserlis's-type estimators. Numerical results in \cite{al2025covarianceB} suggest that the advantage of Isserlis's-type estimators over the sample moment tensor may carry over to isotropic, sub-Gaussian data.  
\end{remark}

The following theorem establishes a lower bound for Isserlis's estimator.

\begin{theorem}\label{coro:Isserlis_lower}
Let $X \sim \mcN(0, \Sigma)$ be a zero-mean Gaussian random vector in $\R^d$ with covariance matrix $\Sigma,$  and let $X_1, \ldots, X_N$ be i.i.d. copies of $X$. For any even integer $p$, let $T$ and $ \widehat{T}_{I}$ be as defined in equations \eqref{eq:T} and \eqref{eq:T_I}, respectively. Then the following bounds hold.
 \smallskip
    
    \noindent    (i) Operator norm bound:
\[
 \E \normn{ \widehat{T}_I- T} 
\gtrsim_p \|\Sigma\|^{p/2} \bigg(\frac{1}{\kappa(\Sigma)^{p/2-1}}\sqrt{\frac{r_2(\Sigma)}{N}}+\bigg(\frac{r_2(\Sigma)}{N}\bigg)^{p/2}\bigg),
\]
where $\kappa(\Sigma):=\lambda_{\max}(\Sigma)/\lambda_{\min}(\Sigma)$ is the condition number of $\Sigma$.

 \noindent    (ii) Entrywise maximum norm bound:
\[
 \E \normn{ \widehat{T}_I- T}_{\max} 
\gtrsim_p \|\Sigma\|_{\max}^{p/2} \bigg(\frac{1}{\kappa(D(\Sigma))^{p/2-1}}\sqrt{\frac{r_{\max}(\Sigma)}{N}}+\bigg(\frac{r_{\max}(\Sigma)}{N}\bigg)^{p/2}\bigg),
\]
where $D(\Sigma)$ is the diagonal matrix with the same diagonal entries as $\Sigma$, and $\kappa(D(\Sigma))$ denotes the condition number of $D(\Sigma)$.
\end{theorem}

\begin{proof}
   \textit{(i) Operator norm bound.} 
    By Proposition \ref{prop:pertubation_lower} below, we have
    \begin{align*}
    \frac{\|\widehat{T}_I-T\|}{\|T\|}\ge \max\left\{\frac{\|\widehat{\Sigma}-\Sigma\|}{\|\Sigma\|} \bigg(\frac{1}{\kappa(\Sigma)}\bigg)^{p/2-1}, \bigg(\frac{\|\widehat{\Sigma}-\Sigma\|}{\|\Sigma\|}\bigg)^{p/2}\right\}.
    \end{align*}
   Taking expectations on both sides and substituting the bound $\E \| \hatSigma - \Sigma\|/\|\Sigma\| \asymp \sqrt{\frac{r_2(\Sigma)}{N}} + \frac{r_2(\Sigma)}{N}$ from \cite[Theorem 4]{koltchinskii2017concentration} yields the desired result.

    \smallskip
    
   \noindent \textit{(ii) Entrywise maximum norm bound.}  By Proposition \ref{prop:pertubation_lower} below,
    \begin{align*}
    \frac{\|\widehat{T}_I-T\|_{\max}}{\|T\|_{\max}}\ge \max\left\{\frac{\|D(\widehat{\Sigma}-\Sigma)\|_{\max}}{\|D(\Sigma)\|_{\max}} \bigg(\frac{1}{\kappa(D(\Sigma))}\bigg)^{p/2-1}, \bigg(\frac{\|D(\widehat{\Sigma}-\Sigma)\|_{\max}}{\|D(\Sigma)\|_{\max}}\bigg)^{p/2}\right\}.
    \end{align*}
   A straightforward adaptation of the argument in \cite[Proposition 3.1]{chen2025sharp} and \cite[Proposition 3.1]{al2025sharp} ---replacing the operator norm with the entrywise maximum norm--- yields
\begin{align*}
\E \|D(\widehat{\Sigma}-\Sigma)\|_{\max}=\E \sup_{v\in \mcE^d} |\langle (\widehat{\Sigma}-\Sigma)v,v \rangle| \gtrsim \|\Sigma\|_{\max}\bigg(\sqrt{\frac{r_{\max}(\Sigma)}{N}}+\frac{r_{\max}(\Sigma)}{N}\bigg).
\end{align*}
Taking expectations in the previous inequality, substituting this bound, and noting that $\|D(\Sigma)\|_{\max}=\|\Sigma\|_{\max}$ completes the proof.
\end{proof}

The following proposition, proved in Section \ref{sec:proposition_lower_proof}, was used in the proof of Theorem \ref{coro:Isserlis_lower}.
\begin{proposition}\label{prop:pertubation_lower} 
Let $X\sim \mcN(0,\Sigma_X)$ and $Y\sim \mcN(0,\Sigma_Y)$, and let $T_X:=\E X^{\otimes p}, T_Y:=\E Y^{\otimes p}$. Then,
    \[
\frac{\|T_X-T_Y\|}{\|T_Y\|}\ge \max\left\{\frac{\|\Sigma_X-\Sigma_Y\|}{\|\Sigma_Y\|} \left(\frac{1}{\kappa(\Sigma_Y)}\right)^{p/2-1}, \left(\frac{\|\Sigma_X-\Sigma_Y\|}{\|\Sigma_Y\|}\right)^{p/2}\right\},
\]
where $\kappa(\Sigma_Y):=\lambda_{\max}(\Sigma_Y)/\lambda_{\min}(\Sigma_Y)$ is the condition number of $\Sigma_Y$.
Similarly, under the entrywise maximum norm,
\begin{align*}
\frac{\|T_X-T_Y\|_{\max}}{\|T_Y\|_{\max}}&\ge \max\left\{\frac{\|D(\Sigma_X-\Sigma_Y)\|_{\max}}{\|D(\Sigma_Y)\|_{\max}} \left(\frac{1}{\kappa(D(\Sigma_Y))}\right)^{p/2-1}, \left(\frac{\|D(\Sigma_X-\Sigma_Y)\|_{\max}}{\|D(\Sigma_Y)\|_{\max}}\right)^{p/2}\right\}, 
\end{align*}
where $D(\Sigma_X-\Sigma_Y)$ and $D(\Sigma_Y)$ denote the diagonal matrices with the same diagonal entries as $\Sigma_X-\Sigma_Y$ and $\Sigma_Y$, respectively. 
\end{proposition}

\subsection{Asymmetric case} \label{ssec:Asymm}

Let $p$ be an even integer, and let $X=(X^{(1)},\ldots,X^{(p)})\in \R^d$ be a zero-mean Gaussian random vector with covariance matrix $\Sigma$, where each block $X^{(k)}\in \R^{d_k}$ and $\sum_{k=1}^{p}d_k=d$. For each $k\in \{1,\ldots,p\}$, denote the marginal covariance by $\Sigma^{(k)}:=\E X^{(k)}\otimes X^{(k)}$, and the cross-covariance by $\Sigma^{(j,k)}:=\E X^{(j)}\otimes X^{(k)}$.  Let $X_1,\ldots,X_N$ be i.i.d. copies of $X$, where $X_i=(X^{(1)}_i,\ldots,X^{(p)}_i)$. Our goal is to estimate the moment tensor 
\begin{align}\label{eq:T_asym}
T:=\E X^{(1)}\otimes\cdots \otimes X^{(p)},
\end{align}
formed by taking the tensor product over the blocks of $X$.

A natural estimator is the sample moment estimator, defined as
\begin{align}\label{eq:T_S_asym}
\widehat{T}_S:=\frac{1}{N} \sum_{i=1}^N X_i^{(1)} \otimes \cdots \otimes X_i^{(p)}.
\end{align}
Entrywise, this corresponds to
\begin{align*}
\big(\widehat{T}_S\big)_{\ell_1, \ldots, \ell_p} 
:= \frac{1}{N} \sum_{i=1}^N X^{(1)}_{i, \ell_1} \cdots X^{(p)}_{i, \ell_p}.
\end{align*}
Alternatively, by Isserlis's theorem, for any multi-index $(\ell_1,\ldots,\ell_p)$,
\begin{align*}
\big(\E X^{(1)}\otimes\cdots \otimes X^{(p)}\big)_{\ell_1, \dots, \ell_p} =
\E[X^{(1)}_{\ell_1} \cdots X^{(p)}_{\ell_p}] = 
\sum_{\pi \in \Pi^2_p} \prod_{(j,k) \in \pi} \Sigma^{(j,k)}_{\ell_j, \ell_k},
\end{align*}
where $\Pi^2_p$ denotes the set of pairwise partitions of $\{1,\ldots,p\}$. This motivates Isserlis's estimator, which substitutes the sample covariances 
\[
\hatSigma^{(j,k)} := \frac{1}{N} \sum_{i=1}^N X^{(j)}_i (X^{(k)}_i)^\top
\]
into the same expression. The resulting estimator is given entrywise by
\begin{align}\label{eq:T_I_asym}
\big(\widehat{T}_I\big)_{\ell_1, \ldots, \ell_p} := 
\sum_{\pi \in \Pi^2_p} \prod_{(j,k) \in \pi}
\hatSigma^{(j,k)}_{\ell_j, \ell_k}.
\end{align}

Theorem \ref{thm:IsserlissBest_asymmetry} compares the performance of the two estimators, $\widehat{T}_{S}$ and $\widehat{T}_I$, under both the operator norm and the entrywise maximum norm.

\begin{theorem}\label{thm:IsserlissBest_asymmetry} 
Let $p$ be an even integer, and let $X=(X^{(1)},\ldots,X^{(p)})\in \R^d$ be a zero-mean Gaussian random vector with covariance matrix $\Sigma$. For each $k\in \{1,\ldots,p\}$, let $\Sigma^{(k)}:=\E X^{(k)}\otimes X^{(k)}$, and for $j,k\in \{1,\ldots,p\}$, let $\Sigma^{(j,k)}:=\E X^{(j)}\otimes X^{(k)}$. Let $X_1,\ldots,X_N$ be i.i.d. copies of $X$, where $X_i=(X^{(1)}_i,\ldots,X^{(p)}_i)$. Let $T, \widehat{T}_{S},$ and $ \widehat{T}_{I}$ be as defined in equations \eqref{eq:T_asym}, \eqref{eq:T_S_asym}, and \eqref{eq:T_I_asym}, respectively. Then the following bounds hold.

\smallskip

   \noindent (i) Operator norm bounds:  \begin{align*}
     \E \|\widehat{T}_S- T\|
     &\lesssim_p \bigg(\prod_{k=1}^p\|\Sigma^{(k)}\|^{1 / 2}\bigg)\Bigg(\bigg(\frac{\sum_{k=1}^p r_2(\Sigma^{(k)})}{N}\bigg)^{1/2}+\frac{\prod_{k=1}^{p}(r_2(\Sigma^{(k)})+\log N)^{1/2}}{N}\Bigg),\\
     \E \|\widehat{T}_I- T \| 
     &\lesssim_p\bigg(\prod_{k=1}^p\|\Sigma^{(k)}\|^{1 / 2}\bigg) \bigg(\frac{\max_{1\le k\le p}r_2(\Sigma^{(k)})}{N}\bigg)^{1/2},\ \ \text{if } \ N\ge \max_{1\le k\le p}r_2(\Sigma^{(k)}).
    \end{align*}
  \noindent    (ii) Entrywise maximum norm bounds:
  \begin{align*}
     \E \|\widehat{T}_S-T\|_{\max}
     &\lesssim_p \!\bigg(\prod_{k=1}^p\|\Sigma^{(k)}\|_{\max}^{1 / 2}\bigg)\Bigg(\bigg(\frac{\sum_{k=1}^p r_{\max}(\Sigma^{(k)})}{N}\bigg)^{1/2}\! \!+\frac{\prod_{k=1}^{p}(r_{\max}(\Sigma^{(k)})+\log N)^{1/2}}{N}\Bigg),\\
     \E \| \widehat{T}_I-T\|_{\max} 
     &\lesssim_p\!\bigg(\prod_{k=1}^p\|\Sigma^{(k)}\|_{\max}^{1 / 2}\bigg) \bigg(\frac{\max_{1\le k\le p}r_{\max}(\Sigma^{(k)})}{N}\bigg)^{1/2},\ \ \text{if } \ N\ge \max_{1\le k\le p}r_{\max}(\Sigma^{(k)}).
    \end{align*}
\end{theorem}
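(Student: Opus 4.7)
The plan is to treat the two estimators by separate arguments: for Isserlis's estimator, a perturbation identity extending the one used in the proof of Theorem~\ref{thm:IsserlissBest}; for the sample moment estimator, a Bernstein-type argument adapting the sharp tensor concentration results of \cite{al2025sharp, chen2025sharp} to the asymmetric setting.

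For Isserlis's estimator, set $E^{(j,k)} := \widehat{\Sigma}^{(j,k)} - \Sigma^{(j,k)}$. For each $\pi \in \Pi^2_p$ I would expand the difference of products via the telescoping identity
\[
\bigotimes_{(j,k) \in \pi} \widehat{\Sigma}^{(j,k)} - \bigotimes_{(j,k) \in \pi} \Sigma^{(j,k)} = \sum_{\varnothing \neq S \subseteq \pi} \; \bigotimes_{(j,k) \in S} E^{(j,k)} \; \otimes \bigotimes_{(j,k) \in \pi \setminus S} \Sigma^{(j,k)},
\]
with tensor indices reordered so the result lies in $\R^{d_1 \times \cdots \times d_p}$. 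Since both $\|\cdot\|$ and $\|\cdot\|_{\max}$ are submultiplicative under the outer tensor product of matrices, each summand is bounded by the product of the operator (resp.\ max) norms of its factors. Combining $\|\Sigma^{(j,k)}\| \leq \|\Sigma^{(j)}\|^{1/2}\|\Sigma^{(k)}\|^{1/2}$ with the Koltchinskii--Lounici inequality applied to the joint covariance of $(X^{(j)}, X^{(k)})$, which gives $\E \|E^{(j,k)}\| \lesssim \|\Sigma^{(j)}\|^{1/2}\|\Sigma^{(k)}\|^{1/2}(\sqrt{r^*_{jk}/N} + r^*_{jk}/N)$ with $r^*_{jk} := \max(r_2(\Sigma^{(j)}), r_2(\Sigma^{(k)}))$, the regime $N \geq \max_k r_2(\Sigma^{(k)})$ makes $\sqrt{r^*_{jk}/N}$ dominant and ensures that the first-order terms ($|S|=1$) control the expansion, since $(r^*_{jk}/N)^{|S|/2} \leq \sqrt{r^*_{jk}/N}$ for $|S| \geq 1$. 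Summing over the $(p-1)!!$ partitions $\pi$ contributes only a $p$-dependent constant, and each first-order term yields a factor $\prod_k \|\Sigma^{(k)}\|^{1/2}$ after collecting the surviving covariance norms along the partition. The max-norm bound follows by the identical argument, replacing the Koltchinskii--Lounici bound by the max-norm concentration of $\widehat{\Sigma}^{(j,k)}$ from \cite{al2024optimal, al2025covariance}.

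For the sample moment estimator, I would express the operator norm variationally and control
\[
\|\widehat{T}_S - T\| = \sup_{v_k \in \mcS^{d_k},\, 1 \leq k \leq p} \Bigg| \frac{1}{N} \sum_{i=1}^N \prod_{k=1}^p \langle X_i^{(k)}, v_k \rangle - \E \prod_{k=1}^p \langle X^{(k)}, v_k \rangle \Bigg|
\]
by combining symmetrization, a Bernstein-type concentration inequality for i.i.d.\ copies of the degree-$p$ Gaussian polynomial $\prod_k \langle X^{(k)}, v_k \rangle$ (sub-Weibull of order $2/p$ by Gaussian hypercontractivity), and a chaining/$\varepsilon$-net argument over the product sphere $\prod_k \mcS^{d_k}$. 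The Gaussian-chaos variance produces the term $\sqrt{\sum_k r_2(\Sigma^{(k)})/N}$, while the polynomial correction $\prod_k (r_2(\Sigma^{(k)})+\log N)^{1/2}/N$ arises because the Bernstein tail contribution is controlled by the maximum over the $N$ samples of $\prod_k \|X_i^{(k)}\|$, whose expectation is of order $\prod_k (\sqrt{r_2(\Sigma^{(k)})}+\sqrt{\log N})\sqrt{\|\Sigma^{(k)}\|}$. The max-norm bound proceeds analogously, with the product sphere replaced by a union bound over the finite set $\prod_k \mcE_k$ and $r_2$ replaced by $r_{\max}$ (using $\E \|X^{(k)}\|_\infty^2 \asymp r_{\max}(\Sigma^{(k)})\|\Sigma^{(k)}\|_{\max}$).

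The main obstacle is the operator-norm bound for $\widehat{T}_S$: carrying out the chaining argument uniformly over $\prod_k \mcS^{d_k}$ so that each block contributes additively through its own effective dimension $r_2(\Sigma^{(k)})$ inside the square root, rather than a crude per-block dimension $d_k$, while correctly interpolating between the Gaussian second-moment regime and the sub-Weibull tail regime, is delicate. The symmetric case in \cite{al2025sharp} provides a template, but extending it to distinct dimensions $d_k$ and jointly (rather than independently) Gaussian blocks requires a decoupling of the underlying Gaussian chaos so that the per-block contributions aggregate additively.
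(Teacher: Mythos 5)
Your overall architecture matches the paper's: a telescoping perturbation bound for $\widehat{T}_I$ that reduces the problem to cross-covariance concentration, plus a chaos/chaining bound for $\widehat{T}_S$ (the paper simply cites \cite[Theorem 2.1]{chen2025sharp} for the operator norm and proves the max-norm case via \cite[Theorem 2.2]{chen2025sharp} and Talagrand's majorizing measure theorem). There are, however, three concrete gaps. First, the key inequality $\E \|E^{(j,k)}\| \lesssim \|\Sigma^{(j)}\|^{1/2}\|\Sigma^{(k)}\|^{1/2}\big(\sqrt{r^*_{jk}/N}+r^*_{jk}/N\big)$ does \emph{not} follow from Koltchinskii--Lounici applied to the joint covariance $\Sigma_J$ of $(X^{(j)},X^{(k)})$: that route only yields $\E\|E^{(j,k)}\|\lesssim \sqrt{\|\Sigma_J\|\,(\ttrace \Sigma^{(j)}+\ttrace\Sigma^{(k)})/N}$ in the relevant regime, which is off by an unbounded factor when the blocks have very different scales (take $d_j=d_k=1$, $\Sigma^{(j)}=1$, $\Sigma^{(k)}=\eps$: the joint route gives $\sqrt{1/N}$, while the correct answer --- and your claimed bound --- is $\sqrt{\eps/N}$). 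You need the properly normalized cross-covariance concentration inequality, which is exactly what the paper imports from \cite[Remark 2.1]{chen2025sharp}. Second, your subset expansion forces you to control $\E\prod_{(j,k)\in S}\|E^{(j,k)}\|$ for $|S|\ge 2$; these error norms are built from the \emph{same} samples and are correlated, so bounding the expectation of the product by the product of first moments is invalid. The paper sidesteps this by proving a high-probability bound on $\eps_*:=\max_{j\ne k}\|E^{(j,k)}\|/(\|\Sigma^{(j)}\|\|\Sigma^{(k)}\|)^{1/2}$ and integrating the tail at the end; Hölder with higher-moment bounds would also work, but some such device must appear.

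Third, for $\E\|\widehat{T}_S-T\|_{\max}$ a plain union bound over $\prod_k\mcE_k$ produces factors of $\log d_k$ rather than $r_{\max}(\Sigma^{(k)})$; since $r_{\max}(\Sigma^{(k)})$ can be $O(1)$ while $d_k\to\infty$ (e.g.\ strongly correlated coordinates), this would destroy the dimension-free character of the stated bound. The paper instead applies the generic-chaining bound of \cite[Theorem 2.2]{chen2025sharp} to the finite class $\{\langle\cdot,v\rangle: v\in\mcE_k\cup(-\mcE_k)\}$ and invokes the majorizing measure theorem to identify $\gamma(\mcE_k,d_{X^{(k)}})\asymp\E\|X^{(k)}\|_\infty$, which is precisely where $r_{\max}$ enters. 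Finally, your plan to re-derive the operator-norm bound for $\widehat{T}_S$ from scratch is not wrong, but it amounts to reproving \cite[Theorem 2.1]{chen2025sharp}; you correctly identify the decoupling and per-block aggregation as the hard step, and the paper does not reprove it, it cites it.
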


\begin{remark}
    The upper bounds on the deviation of sample moment estimator $\widehat{T}_{S}$ from its expectation are sharp under both the operator norm and entrywise maximum norm when the vectors $X^{(1)},\ldots, X^{(p)}$, together with their samples $(X^{(1)}_i)_{i=1}^{N},\ldots,(X^{(p)}_i)_{i=1}^N$, are mutually independent; see \cite[Theorem 2.1]{chen2025sharp} and Theorem \ref{thm:maximum_asymmetric}.  Consistent estimation of $T$ using the sample moment estimator $\widehat{T}_{S}$ requires a sample size satisfying  
    \[
    N\gg \prod_{k=1}^{p}(r_2(\Sigma^{(k)})+\log N)^{1/2} \quad \text{or} \quad   N\gg \prod_{k=1}^{p}(r_{\max}(\Sigma^{(k)})+\log N)^{1/2}
    \]
    under the operator norm and the entrywise maximum norm, respectively. In contrast, Isserlis's estimator $\widehat{T}_I$ only requires sample size
    \[
    N\gg \max_{1\le k\le p}r_{2}(\Sigma^{(k)}) \quad \text{or} \quad N\gg \max_{1\le k\le p}r_{\max}(\Sigma^{(k)})
    \]
   for consistency under the respective norms, leading to a significant reduction in sample complexity.
\end{remark}

\begin{proof}[Proof of Theorem \ref{thm:IsserlissBest_asymmetry}]

First, for the sample moment estimator $\widehat{T}_{S}$, the upper bound on its deviation from $T$ under the operator norm follows directly from \cite[Theorem 2.1]{chen2025sharp}, while the corresponding bound under the entrywise maximum norm is given by Theorem~\ref{thm:maximum_asymmetric} in this paper. It remains to analyze Isserlis's estimator $\widehat{T}_I$.

We apply the upper bound in Proposition~\ref{prop:perturbation}, which yields
  \[ 
  \|\widehat{T}_I-T\|\le \bigg(\prod_{k=1}^{p}\|\Sigma^{(k)}\|^{1/2}\bigg) (p-1)!! \cdot \frac{p}{2}\cdot \varepsilon_{*}(1+\varepsilon_{*})^{p/2-1},
\]
where
\[
\varepsilon_{*}:=\max_{j\ne k}\frac{\|\widehat{\Sigma}^{(j,k)}-\Sigma^{(j,k)}\|}{\big(\|\Sigma^{(j,j)}\| \|\Sigma^{(k,k)}\|\big)^{1/2}}.
\]

To control $\varepsilon_*$, we use the bound from \cite[Remark 2.1 and page 21]{chen2025sharp} for the sample cross-covariance. For $j,k\in \{1,\ldots,p\}$ and any $u\ge 0$, it holds with probability at least $1-\exp(-u^2)$ that
\[
\frac{\|\widehat{\Sigma}^{(j,k)}-\Sigma^{(j,k)}\|}{\big(\|\Sigma^{(j,j)}\|\|\Sigma^{(k,k)}\|\big)^{1/2}}\lesssim u\bigg(\frac{r_2(\Sigma^{(j)})+r_2(\Sigma^{(k)})}{N}\bigg)^{1/2}+u^2\frac{(r_2(\Sigma^{(j)})r_2(\Sigma^{(k)}))^{1/2}}{N}.
\]
Applying a union bound over all $j\ne k$ yields, with probability at least $1-p^2 \exp(-u^2)$,
\begin{align*}
\varepsilon_* &=\max_{j\ne k}\frac{\|\widehat{\Sigma}^{(j,k)}-\Sigma^{(j,k)}\|}{\big(\|\Sigma^{(j,j)}\|\|\Sigma^{(k,k)}\|\big)^{1/2}}\\
&\lesssim \max_{j\ne k} \left\{u\bigg(\frac{r_2(\Sigma^{(j)})+r_2(\Sigma^{(k)})}{N}\bigg)^{1/2}+u^2\frac{(r_2(\Sigma^{(j)})r_2(\Sigma^{(k)}))^{1/2}}{N}\right\}\\
&\lesssim u \bigg(\frac{\max_{1\le k\le p}r_2(\Sigma^{(k)})}{N}\bigg)^{1/2}+u^2 \frac{\max_{1\le k\le p}r_2(\Sigma^{(k)})}{N}.
\end{align*}
Substituting this bound into the inequality for $\|\widehat{T}_I-T\|$, we obtain, with probability at least $1-p^2 \exp(-u^2)$:
 \begin{align*}
  \|\widehat{T}_I-T\| &\le \bigg(\prod_{k=1}^{p}\|\Sigma^{(k)}\|^{1/2}\bigg) (p-1)!! \cdot \frac{p}{2}\cdot \varepsilon_{*}(1+\varepsilon_{*})^{p/2-1}\\
  &\lesssim_p \bigg(\prod_{k=1}^{p}\|\Sigma^{(k)}\|^{1/2}\bigg) (\varepsilon_*+\varepsilon_*^{p/2})\\
  &\lesssim_p \bigg(\prod_{k=1}^{p}\|\Sigma^{(k)}\|^{1/2}\bigg)\bigg(u \bigg(\frac{\max_{1\le k\le p}r_2(\Sigma^{(k)})}{N}\bigg)^{1/2}+u^p \bigg(\frac{\max_{1\le k\le p}r_2(\Sigma^{(k)})}{N}\bigg)^{p/2}\bigg).
 \end{align*}
Integrating the tail bound yields the following expectation bound: \begin{align*}
 \E \|\widehat{T}_I-T\| &\lesssim_p\bigg(\prod_{k=1}^{p}\|\Sigma^{(k)}\|^{1/2}\bigg)\bigg( \bigg(\frac{\max_{1\le k\le p}r_2(\Sigma^{(k)})}{N}\bigg)^{1/2}+ \bigg(\frac{\max_{1\le k\le p}r_2(\Sigma^{(k)})}{N}\bigg)^{p/2}\bigg)\\
 &\asymp_p\bigg(\prod_{k=1}^p\|\Sigma^{(k)}\|^{1 / 2}\bigg) \bigg(\frac{\max_{1\le k\le p}r_2(\Sigma^{(k)})}{N}\bigg)^{1/2},\quad \text{ if } \ N\ge \max_{1\le k\le p}r_2(\Sigma^{(k)}).
 \end{align*}
An analogous argument yields the corresponding bound under the entrywise maximum norm. This completes the proof.
\end{proof}

The following proposition is used in the proof of Theorem~\ref{thm:IsserlissBest_asymmetry}, and its proof is deferred to Section~\ref{sec:proposition_proof}.

\begin{proposition}\label{prop:perturbation} 

Let $p$ be an even integer, and let $X=(X^{(1)},\ldots,X^{(p)})$ and $Y=(Y^{(1)},\ldots,Y^{(p)})$ be zero-mean Gaussian random vectors in $\R^d$, where each $X^{(k)}, Y^{(k)}\in \R^{d_k}$ and $d=\sum_{k=1}^{p}d_k$. For $j, k\in \{1,\ldots,p\}$, denote the cross-covariance blocks by
\[
\Sigma^{(j,k)}_{X}=\E X^{(j)}\otimes X^{(k)}, \qquad \Sigma^{(j,k)}_{Y}=\E Y^{(j)}\otimes Y^{(k)}.
\] 
Let
\[
T_{X}:=\E X^{(1)}\otimes \cdots \otimes X^{(p)}, \qquad T_{Y}:=\E Y^{(1)}\otimes \cdots \otimes Y^{(p)},
\]
denote the moment tensors formed by taking tensor products over the blocks of $X$ and $Y$, respectively. 
Then,
    \[ \|T_{X}-T_{Y}\|\le \bigg(\prod_{k=1}^{p}\|\Sigma^{(k,k)}_{Y}\|^{1/2}\bigg) (p-1)!! \cdot \frac{p}{2}\cdot \varepsilon_{*}(1+\varepsilon_{*})^{p/2-1},
\]
where
\[
\varepsilon_{*}:=\max_{j\ne k}\frac{\|\Sigma^{(j,k)}_{X}-\Sigma^{(j,k)}_{Y}\|}{\big(\|\Sigma^{(j,j)}_{Y}\| \|\Sigma^{(k,k)}_{Y}\|\big)^{1/2}}.
\]
 For the entrywise maximum norm, it similarly holds that 
 \[ \|T_{X}-T_{Y}\|_{\max}\le \bigg(\prod_{k=1}^{p}\|\Sigma^{(k,k)}_{Y}\|_{\max}^{1/2}\bigg) (p-1)!! \cdot \frac{p}{2}\cdot \bar{\varepsilon}_{*}(1+\bar{\varepsilon}_{*})^{p/2-1},
\]
where
\[
\bar{\varepsilon}_{*}:=\max_{j\ne k}\frac{\|\Sigma^{(j,k)}_{X}-\Sigma^{(j,k)}_{Y}\|_{\max}}{\big(\|\Sigma^{(j,j)}_{Y}\|_{\max} \|\Sigma^{(k,k)}_{Y}\|_{\max}\big)^{1/2}}.
\]
\end{proposition}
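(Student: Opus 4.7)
The strategy is to apply Isserlis's theorem to both $T_X$ and $T_Y$ and reduce the problem to a perturbation analysis over pairwise partitions. Writing
\[
T_X - T_Y = \sum_{\pi \in \Pi^2_p} \bigl(T_X^\pi - T_Y^\pi\bigr), \qquad (T_X^\pi)_{\ell_1, \ldots, \ell_p} := \prod_{(j,k) \in \pi} (\Sigma_X^{(j,k)})_{\ell_j, \ell_k},
\]
and defining $T_Y^\pi$ analogously, reduces the task (via the triangle inequality together with $|\Pi_p^2| = (p-1)!!$) to a uniform bound on $\|T_X^\pi - T_Y^\pi\|$ for each $\pi$.

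The key structural observation is that the tensor $T_X^\pi$ factorizes over the pairs of $\pi$. Since each index in $\{1, \ldots, p\}$ appears in exactly one pair, a direct computation gives $\inp{T_X^\pi, v_1 \otimes \cdots \otimes v_p} = \prod_{(j,k) \in \pi} v_j^\top \Sigma_X^{(j,k)} v_k$. The unit vectors in distinct pairs are decoupled, so the supremum factorizes and yields $\|T_X^\pi\| = \prod_{(j,k) \in \pi} \|\Sigma_X^{(j,k)}\|$; specializing the $v_k$ to standard basis vectors gives the analogous identity $\|T_X^\pi\|_{\max} = \prod_{(j,k) \in \pi} \|\Sigma_X^{(j,k)}\|_{\max}$. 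This factorization extends to any tensor built from a single matrix per pair of $\pi$.

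Fixing an ordering $\pi = \{(j_1, k_1), \ldots, (j_{p/2}, k_{p/2})\}$ and applying the scalar telescoping identity $\prod_i a_i - \prod_i b_i = \sum_i a_1\cdots a_{i-1}(a_i - b_i) b_{i+1} \cdots b_{p/2}$ entrywise decomposes $T_X^\pi - T_Y^\pi$ into $p/2$ tensors, each with the one-matrix-per-pair structure. Using positive semidefiniteness of the joint covariance to obtain $\|\Sigma_Y^{(j,k)}\| \le \sqrt{\|\Sigma_Y^{(j,j)}\|\|\Sigma_Y^{(k,k)}\|}$, and the triangle inequality to obtain $\|\Sigma_X^{(j,k)}\| \le (1+\varepsilon_*)\sqrt{\|\Sigma_Y^{(j,j)}\|\|\Sigma_Y^{(k,k)}\|}$, the $i$th telescoping term is bounded in operator norm by $\varepsilon_*(1+\varepsilon_*)^{i-1} \prod_{(j,k) \in \pi} \sqrt{\|\Sigma_Y^{(j,j)}\|\|\Sigma_Y^{(k,k)}\|}$. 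Because $\pi$ partitions $\{1, \ldots, p\}$, the rightmost product collapses to $\prod_{k=1}^p \|\Sigma_Y^{(k,k)}\|^{1/2}$. The bound $\sum_{i=1}^{p/2}(1+\varepsilon_*)^{i-1} \le \tfrac{p}{2}(1+\varepsilon_*)^{p/2-1}$, followed by summation over the $(p-1)!!$ partitions, yields the stated operator-norm inequality. The maximum-norm statement follows mutatis mutandis using $\|\Sigma_Y^{(j,k)}\|_{\max} \le \sqrt{\|\Sigma_Y^{(j,j)}\|_{\max}\|\Sigma_Y^{(k,k)}\|_{\max}}$, an immediate consequence of the scalar Cauchy--Schwarz inequality $|\E X_a^{(j)} X_b^{(k)}|^2 \le \E (X_a^{(j)})^2 \cdot \E (X_b^{(k)})^2$.

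The main obstacle is the structural step: verifying that both the operator and entrywise maximum norms of $T_X^\pi$ factorize exactly across the pairs of $\pi$. This factorization, which hinges on each mode appearing in precisely one pair of the partition, is what permits the resulting estimate to be dimension-free and to depend only on the marginal diagonal blocks $\Sigma_Y^{(k,k)}$ and on the relative perturbation $\varepsilon_*$.
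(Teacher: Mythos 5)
Your proposal is correct and follows essentially the same route as the paper's proof: Isserlis's theorem to reduce to pairwise partitions, the telescoping identity across the $p/2$ pairs of each partition, Cauchy--Schwarz to dominate $\|\Sigma^{(j,k)}_Y\|$ by $\|\Sigma^{(j,j)}_Y\|^{1/2}\|\Sigma^{(k,k)}_Y\|^{1/2}$, and a geometric-sum bound giving the factor $\tfrac{p}{2}\varepsilon_*(1+\varepsilon_*)^{p/2-1}$. The only cosmetic differences are that you decompose the tensor into partition tensors $T^\pi$ and invoke the norm factorization explicitly, whereas the paper works directly with $\langle T_X-T_Y, v_1\otimes\cdots\otimes v_p\rangle$ and keeps the individual $\varepsilon^{(j,k)}$ through an exact product identity before passing to $\varepsilon_*$; both yield the identical final bound.
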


\begin{corollary}\label{coro:perturbation} Let $X\sim \mcN(0,\Sigma_X)$ and $Y\sim \mcN(0,\Sigma_Y)$, and let $T_X:=\E X^{\otimes p}, T_Y:=\E Y^{\otimes p}$. Then,
    \[
 \frac{\|T_X-T_Y\|}{\|T_Y\|}\le \frac{p}{2}\cdot\frac{\|\Sigma_{X}-\Sigma_{Y}\|}{\|\Sigma_{Y}\|}\left(1+\frac{\|\Sigma_{X}-\Sigma_{Y}\|}{\|\Sigma_{Y}\|}\right)^{p/2-1}. 
\]
Similarly, under the entrywise maximum norm,
    \begin{align*}
        \frac{\|T_X-T_Y\|_{\max}}{\|T_Y\|_{\max}}\le \frac{p}{2}\cdot\frac{\|\Sigma_{X}-\Sigma_{Y}\|_{\max}}{\|\Sigma_{Y}\|_{\max}}\left(1+\frac{\|\Sigma_{X}-\Sigma_{Y}\|_{\max}}{\|\Sigma_{Y}\|_{\max}}\right)^{p/2-1}.
    \end{align*}
    \end{corollary}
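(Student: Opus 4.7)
The plan is to derive Corollary~\ref{coro:perturbation} as an immediate specialization of Proposition~\ref{prop:perturbation} via a ``diagonal'' reduction. I would apply the proposition with the block choices $X^{(1)} = \cdots = X^{(p)} = X$ and $Y^{(1)} = \cdots = Y^{(p)} = Y$. Under this choice every cross-covariance block collapses to the full covariance, so $\Sigma_X^{(j,k)} = \Sigma_X$ and $\Sigma_Y^{(j,k)} = \Sigma_Y$ for all $j,k$. Consequently, $T_X$ and $T_Y$ in the proposition literally equal $\E X^{\otimes p}$ and $\E Y^{\otimes p}$, while $\varepsilon_* = \|\Sigma_X - \Sigma_Y\|/\|\Sigma_Y\|$ and $\prod_{k=1}^{p}\|\Sigma_Y^{(k,k)}\|^{1/2} = \|\Sigma_Y\|^{p/2}$. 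Substituting into the proposition yields the absolute bound
\[
\|T_X - T_Y\| \le \|\Sigma_Y\|^{p/2}\,(p-1)!! \cdot \tfrac{p}{2}\, \varepsilon_*(1+\varepsilon_*)^{p/2-1}.
\]

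To convert this into the relative form stated in the Corollary, I would establish the identity $\|T_Y\| = (p-1)!!\,\|\Sigma_Y\|^{p/2}$. For unit vectors $v_1, \dots, v_p \in \mcS^d$, Isserlis's theorem gives
\[
\inp{T_Y, v_1 \otimes \cdots \otimes v_p} = \sum_{\pi \in \Pi^2_p} \prod_{(j,k) \in \pi} \inp{v_j, \Sigma_Y v_k},
\]
and since $|\inp{v_j, \Sigma_Y v_k}| \le \|\Sigma_Y\|$, this sum is at most $(p-1)!!\,\|\Sigma_Y\|^{p/2}$. Taking every $v_k$ equal to a top unit eigenvector of $\Sigma_Y$ attains this value, yielding equality. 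Dividing the displayed bound by $\|T_Y\|$ produces the operator-norm inequality of the Corollary.

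The entrywise maximum norm claim follows by the same scheme. Invoking the second half of Proposition~\ref{prop:perturbation} under the same diagonal reduction gives $\|T_X - T_Y\|_{\max} \le \|\Sigma_Y\|_{\max}^{p/2}\,(p-1)!! \cdot \tfrac{p}{2}\,\bar\varepsilon_*(1+\bar\varepsilon_*)^{p/2-1}$, with $\bar\varepsilon_* = \|\Sigma_X - \Sigma_Y\|_{\max}/\|\Sigma_Y\|_{\max}$. For Gaussian covariance, Cauchy--Schwarz gives $|(\Sigma_Y)_{i,j}| \le \sqrt{(\Sigma_Y)_{i,i}(\Sigma_Y)_{j,j}}$, so $\|\Sigma_Y\|_{\max}$ is attained on the diagonal at some index $\ell^*$. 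Evaluating the entrywise Isserlis expansion at $\ell_1 = \cdots = \ell_p = \ell^*$ and bounding termwise in absolute value then yields the identity $\|T_Y\|_{\max} = (p-1)!!\,\|\Sigma_Y\|_{\max}^{p/2}$, and dividing finishes the proof.

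The argument contains no genuine obstacle beyond recognizing the two sharp identities for $\|T_Y\|$ and $\|T_Y\|_{\max}$; they are precisely what turns the absolute bounds of Proposition~\ref{prop:perturbation} into the relative form of the Corollary.
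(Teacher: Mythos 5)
Your proposal is correct and follows essentially the same route as the paper: specialize Proposition~\ref{prop:perturbation} to identical blocks and divide by the exact value $\|T_Y\| = (p-1)!!\,\|\Sigma_Y\|^{p/2}$ (and its max-norm analogue), which the paper obtains via the Gaussian moment formula $\sup_{v\in\mcS^d}\E\langle Y,v\rangle^p=(p-1)!!\,\|\Sigma_Y\|^{p/2}$ and you obtain by bounding the Isserlis expansion termwise and exhibiting attainment. The only difference is that you spell out the attainment arguments (top eigenvector, diagonal maximizing index) that the paper leaves implicit.
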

    \begin{proof}[Proof of Corollary \ref{coro:perturbation}]
  Apply Proposition \ref{prop:perturbation} with $X^{(1)}=\cdots=X^{(p)}=:X$ and $Y^{(1)}=\cdots=Y^{(p)}=:Y$, so that $\Sigma_{X}^{(j,k)}=:\Sigma_{X}$ and  $\Sigma_{Y}^{(j,k)}=:\Sigma_{Y}$ for all $j,k\in \{1,\ldots,p\}$. The result then follows from computing the norm of the reference tensor:
        \[
       \| T_{Y}\|=\|\E Y^{\otimes p}\|=\sup_{v\in \mcS^{d}}\E \langle Y,v\rangle^p=(p-1)!! \cdot \|\Sigma_Y\|^{p/2},
        \]
     where the last equality uses the moment formula for centered Gaussian variables. The bound under the entrywise maximum norm follows analogously.       
    \end{proof}

\section{Proof of Proposition \ref{prop:pertubation_lower}}\label{sec:proposition_lower_proof}

\begin{proof}[Proof of Proposition \ref{prop:pertubation_lower}]
We begin by observing that
\begin{align*}
        \|T_X-T_Y\|
        &=\sup_{v\in \mcS^{d}} | \left\langle \E X^{\otimes p}- \E Y^{\otimes p},v^{\otimes p}\right\rangle|\\
        &=\sup_{v\in \mcS^{d}} \left| \E \langle X,v \rangle^{p}-\E \langle Y,v \rangle^{p}\right|\\
        &=(p-1)!!\sup_{v\in \mcS^{d}} \left| \langle\Sigma_X v,v\rangle^{p/2}-\langle \Sigma_Y v,v \rangle^{p/2}\right|\\
        &=(p-1)!!\sup_{v\in \mcS^{d}} \left| \langle (\Sigma_X- \Sigma_Y) v,v \rangle\right|\, \bigg|\sum_{\ell=0}^{p/2-1} \langle \Sigma_X v,v \rangle^{\ell}  \langle \Sigma_Y v,v \rangle^{p/2-1-\ell}\bigg|,        
    \end{align*}
  where the third equality uses the moment formula for centered Gaussian variables. Taking the supremum over the first term and the infimum over the second yields
  \begin{align*}
   \|T_X-T_Y\| &\ge (p-1)!! \sup_{v\in \mcS^{d}}\left| \langle (\Sigma_X- \Sigma_Y) v,v \rangle\right|\cdot \inf_{v\in \mcS^{d}}\bigg|\sum_{\ell=0}^{p/2-1} \langle \Sigma_X v,v \rangle^{\ell}  \langle \Sigma_Y v,v \rangle^{p/2-1-\ell}\bigg|\\
        &\ge (p-1)!!\|\Sigma_X-\Sigma_Y\| \cdot \lambda_{\min}(\Sigma_Y)^{p/2-1}.
  \end{align*}
Since $\|T_Y\| = (p-1)!!\,\|\Sigma_Y\|^{p/2}$, it follows that
    \begin{align}\label{eq:lower_aux1}
        \frac{\|T_X-T_Y\|}{\|T_Y\|} \ge \frac{\|\Sigma_X-\Sigma_Y\|}{\|\Sigma_Y\|} \cdot \left(\frac{1}{\kappa(\Sigma_Y)}\right)^{p/2-1},
    \end{align}
    where $\kappa(\Sigma_Y):=\lambda_{\max}(\Sigma_Y)/\lambda_{\min}(\Sigma_Y)$ is the condition number of $\Sigma_Y$. 

   Next, let $v$ be a unit eigenvector associated with the eigenvalue of $\Sigma_X - \Sigma_Y$ of largest magnitude, so that $\langle (\Sigma_X - \Sigma_Y)v, v \rangle = \pm \|\Sigma_X - \Sigma_Y\|$.  If the sign is positive, we have
\begin{align*}
    \|T_X-T_Y\|
    &\ge (p-1)!! \left| \langle\Sigma_X v,v\rangle^{p/2}-\langle \Sigma_Y v,v \rangle^{p/2}\right|\\
    &= (p-1)!! \left|\big(\langle (\Sigma_X-\Sigma_Y)v,v\rangle+\langle \Sigma_Y v,v \rangle\big)^{p/2}-\langle \Sigma_Y v,v \rangle^{p/2}\right| \\
    &= (p-1)!!\Big(\big(\|\Sigma_X-\Sigma_Y\|+\langle \Sigma_Y v, v\rangle\big)^{p/2}-\langle \Sigma_Y v,v \rangle^{p/2}\Big)\\
    &\ge (p-1)!! \|\Sigma_X-\Sigma_Y\|^{p/2}.
\end{align*}
If the sign is negative, a symmetric argument gives the same bound with $\Sigma_X$ and $\Sigma_Y$ interchanged. Therefore,
\begin{align}\label{eq:lower_aux2}
\frac{\|T_X-T_Y\|}{\|T_Y\|} \ge \frac{(p-1)!! \|\Sigma_X-\Sigma_Y\|^{p/2}}{(p-1)!!\|\Sigma_Y\|^{p/2}} = \left(\frac{\|\Sigma_X-\Sigma_Y\|}{\|\Sigma_Y\|}\right)^{p/2}.
\end{align}
Combining \eqref{eq:lower_aux1} and \eqref{eq:lower_aux2} yields the desired lower bound in operator norm.

For the entrywise maximum norm, we first obtain a lower bound of the same form as in the operator norm case, except that the supremum is now taken over the standard basis vectors. Specifically,
\begin{align*}
\|T_X-T_Y\|_{\max}&=\sup_{v_k \in \mcE^d, 1 \le k \le p}  |\inp{T_X-T_Y, v_1 \otimes \cdots \otimes v_p}|\\
&\ge \sup_{v \in \mcE^d}  |\inp{T_X-T_Y, v^{\otimes p}}|\\
&=(p-1)!!\sup_{v\in \mcE^d} \left| \langle\Sigma_X v,v\rangle^{p/2}-\langle \Sigma_Y v,v \rangle^{p/2}\right|,
\end{align*}
where $\mcE^{d} = \{e_i\}_{i=1}^{d}$ denotes the standard basis of $\R^d$. Then, the desired lower bound for the entrywise maximum norm follows from the same argument as for the operator norm case, with only minor modifications. We omit the details for brevity.
\end{proof}

\section{Proof of Proposition \ref{prop:perturbation}}\label{sec:proposition_proof}

\begin{proof}[Proof of Proposition \ref{prop:perturbation}]

For arbitrary vectors $v_{1}\in \R^{d_1},\ldots ,v_{p}\in \R^{d_p}$, we have
\begin{align*}
    \langle T_{X}-T_{Y},v_1\otimes \cdots \otimes v_p \rangle 
    &=\Big\langle \E X^{(1)}\otimes \cdots \otimes X^{(p)} - \E Y^{(1)}\otimes \cdots \otimes Y^{(p)},v_1\otimes \cdots \otimes v_p\Big\rangle\\
    &=\E \prod_{k=1}^{p}\langle X^{(k)},v_k \rangle-\E \prod_{k=1}^{p}\langle Y^{(k)},v_k \rangle\\
    &\overset{\text{(i)}}{=} \sum_{\pi\in \Pi_p^2} \prod_{(j,k)\in \pi}\E \langle X^{(j)},v_j \rangle \langle X^{(k)},v_k \rangle-\sum_{\pi\in \Pi_p^2} \prod_{(j,k)\in \pi}\E \langle Y^{(j)},v_j \rangle \langle Y^{(k)},v_k \rangle\\
    &=\sum_{\pi\in \Pi_p^2}\bigg(\prod_{(j,k)\in \pi}\langle \Sigma^{(j,k)}_{X} v_j,v_k \rangle-\prod_{(j,k)\in \pi} \langle \Sigma^{(j,k)}_{Y} v_j,v_k \rangle \bigg).
\end{align*}
Here, (i) follows from Isserlis's theorem, and $\Pi_p^2$ denotes the set of pairwise partitions of $\{1,\ldots,p\}$. Denoting by  $\pi=(\pi(1),\ldots,\pi(p))$ a fixed ordering of the indices in the pairing $\pi$, and applying the telescoping identity
\[
a_1 \cdots a_{p/2}-b_1 \cdots b_{p/2}=\sum_{\ell=1}^{p/2} a_1 \cdots a_{\ell-1}\left(a_\ell-b_\ell\right) b_{\ell+1} \cdots b_{p/2},
\]
we obtain
\begin{align*}
&\langle T_{X} - T_{Y}, v_1 \otimes \cdots \otimes v_p \rangle
= \sum_{\pi\in \Pi_p^2}\bigg(\prod_{(j,k)\in \pi}\langle \Sigma^{(j,k)}_{X} v_j,v_k \rangle-\prod_{(j,k)\in \pi} \langle \Sigma^{(j,k)}_{Y} v_j,v_k \rangle \bigg)\\
&=\sum_{\pi \in \Pi_p^2} \sum_{\ell=1}^{p/2} \Bigg[\bigg(\prod_{s=1}^{\ell-1} \langle \Sigma^{(\pi(2s-1),\pi(2s))}_{X} v_{\pi(2s-1)}, v_{\pi(2s)} \rangle
\bigg) \\
&\quad \times \langle (\Sigma^{(\pi(2\ell-1),\pi(2\ell))}_{X} - \Sigma^{(\pi(2\ell-1),\pi(2\ell))}_{Y}) v_{\pi(2\ell - 1)}, v_{\pi(2\ell)} \rangle  \bigg( \prod_{s=\ell+1}^{p/2} \langle \Sigma^{(\pi(2s-1),\pi(2s))}_{Y} v_{\pi(2s-1)}, v_{\pi(2s)} \rangle\bigg)\Bigg].
\end{align*}

To bound the operator norm, we take the supremum over $v_1\in \mcS^{d_1},\ldots,v_p\in\mcS^{d_p}$,
\begin{align*}
    &\|T_{X}-T_{Y}\| 
    =\sup_{ v_k\in \mcS^{d_k},1\le k\le p} |\langle T_{X}- T_{Y},v_1\otimes \cdots \otimes v_p \rangle| \\
    & = \sup_{ v_k\in \mcS^{d_k},1\le k\le p}\bigg|\sum_{\pi \in \Pi_p^2} \sum_{\ell=1}^{p/2} \bigg(\prod_{s=1}^{\ell-1} \langle \Sigma^{(\pi(2s-1),\pi(2s))}_{X} v_{\pi(2s-1)}, v_{\pi(2s)} \rangle
\bigg) \\
&\quad \times \langle (\Sigma^{(\pi(2\ell-1),\pi(2\ell))}_{X} - \Sigma^{(\pi(2\ell-1),\pi(2\ell))}_{Y}) v_{\pi(2\ell - 1)}, v_{\pi(2\ell)} \rangle  \bigg( \prod_{s=\ell+1}^{p/2} \langle \Sigma^{(\pi(2s-1),\pi(2s))}_{Y} v_{\pi(2s-1)}, v_{\pi(2s)} \rangle\bigg)\bigg|\\
 &\le\sum_{\pi\in \Pi_{p}^2}\sum_{\ell=1}^{p/2}  \Bigg[\sup_{ v_k\in \mcS^{d_k},1\le k\le p} \bigg| \bigg(\prod_{s=1}^{\ell-1} \langle \Sigma^{(\pi(2s-1),\pi(2s))}_{X} v_{\pi(2s-1)}, v_{\pi(2s)} \rangle
\bigg) \\
&\quad\times \langle (\Sigma^{(\pi(2\ell-1),\pi(2\ell))}_{X} - \Sigma^{(\pi(2\ell-1),\pi(2\ell))}_{Y}) v_{\pi(2\ell - 1)}, v_{\pi(2\ell)} \rangle 
 \bigg( \prod_{s=\ell+1}^{p/2} \langle \Sigma^{(\pi(2s-1),\pi(2s))}_{Y} v_{\pi(2s-1)}, v_{\pi(2s)}\rangle \bigg)\bigg|\Bigg]\\
   & =\sum_{\pi\in \Pi_{p}^2}\sum_{\ell=1}^{p/2} \Bigg[ \bigg(\prod_{s=1}^{\ell-1} \|\Sigma^{(\pi(2s-1),\pi(2s))}_{X}\|\bigg)
   \|\Sigma^{(\pi(2\ell-1),\pi(2\ell))}_{X}-\Sigma^{(\pi(2\ell-1),\pi(2\ell))}_{Y}\|
    \bigg(\prod_{s=\ell+1}^{p/2} \|\Sigma^{(\pi(2s-1),\pi(2s))}_{Y}\|\bigg) \Bigg].
\end{align*} 

For $j,k\in \{1,\ldots,p\}$, we introduce the normalized deviation $\varepsilon^{(j,k)}:=\frac{\|\Sigma^{(j,k)}_{X}-\Sigma^{(j,k)}_{Y}\|}{\big(\|\Sigma^{(j,j)}_{Y}\| \|\Sigma^{(k,k)}_{Y}\|\big)^{1/2}} $
and define $\varepsilon_{*}:=\max_{j\ne k}\varepsilon^{(j,k)}$. Using that $\|\Sigma^{(j,k)}_{Y}\|\le \|\Sigma^{(j,j)}_{Y}\|^{1/2}\|\Sigma^{(k,k)}_{Y}\|^{1/2}$, we obtain
\begin{align*}
    &\|T_{X}-T_{Y}\|
    \le \sum_{\pi\in \Pi_{p}^2}\sum_{\ell=1}^{p/2} \Bigg[\bigg(\prod_{s=1}^{\ell-1} \|\Sigma^{(\pi(2s-1),\pi(2s))}_{X}\|\bigg)\\
    &\quad \times \|\Sigma^{(\pi(2\ell-1),\pi(2\ell))}_{X}-\Sigma^{(\pi(2\ell-1),\pi(2\ell))}_{Y}\|\bigg(\prod_{s=\ell+1}^{p/2} \|\Sigma^{(\pi(2s-1),\pi(2s))}_{Y}\|\bigg)\Bigg] \\
    &\le \sum_{\pi\in \Pi_{p}^2}\sum_{\ell=1}^{p/2} \Bigg[ \bigg(\prod_{s=1}^{\ell-1} \|\Sigma^{(\pi(2s-1),\pi(2s-1))}_{Y}\|^{1/2} \|\Sigma^{(\pi(2s),\pi(2s))}_{Y}\|^{1/2}\big(1+\varepsilon^{(\pi(2s-1),\pi(2s))}\big)\bigg)\\
    &\quad\times \bigg(\|\Sigma^{(\pi(2\ell-1),\pi(2\ell-1))}_Y\|^{1/2}\|\Sigma^{(\pi(2\ell),\pi(2\ell))}_Y\|^{1/2}\varepsilon^{(\pi(2\ell-1),\pi(2\ell))}\bigg)\\
    &\quad\times \bigg(\prod_{s=\ell+1}^{p/2} \|\Sigma^{(\pi(2s-1),\pi(2s-1))}_{Y}\|^{1/2}\|\Sigma^{(\pi(2s),\pi(2s))}_{Y}\|^{1/2}\bigg) \Bigg]\\
    &=\bigg(\prod_{k=1}^{p}\|\Sigma^{(k,k)}_{Y}\|^{1/2}\bigg)\sum_{\pi\in \Pi_{p}^2}\sum_{\ell=1}^{p/2}\varepsilon^{(\pi(2\ell-1),\pi(2\ell))}\prod_{s=1}^{\ell-1}\big(1+\varepsilon^{(\pi(2s-1),\pi(2s))}\big).
\end{align*}
Using the identity \[
\sum_{\ell=1}^{p/2}a_{\ell}\prod_{s=1}^{\ell-1}(1+a_s)=\prod_{\ell=1}^{p/2}(1+a_{\ell})-1,\]
we conclude that
\begin{align*}
    \|T_{X}-T_{Y}\|&\le\bigg(\prod_{k=1}^{p}\|\Sigma^{(k,k)}_{Y}\|^{1/2}\bigg)\sum_{\pi\in \Pi_{p}^2}\bigg(\prod_{\ell=1}^{p/2}\big(1+\varepsilon^{(\pi(2\ell-1),\pi(2\ell))}\big)-1\bigg)\\
    &\le  \bigg(\prod_{k=1}^{p}\|\Sigma^{(k,k)}_{Y}\|^{1/2}\bigg) (p-1)!! \left((1+\varepsilon_{*})^{p/2}-1\right)\\
    & \le  \bigg(\prod_{k=1}^{p}\|\Sigma^{(k,k)}_{Y}\|^{1/2}\bigg) (p-1)!! \cdot \frac{p}{2}\cdot \varepsilon_{*}(1+\varepsilon_{*})^{p/2-1},
\end{align*}
where we used the inequality $(1+\varepsilon_*)^{p/2}-1\le \frac{p}{2}\cdot\varepsilon_{*}(1+\varepsilon_{*})^{p/2-1}$.

The bound under the entrywise maximum norm follows from an analogous argument, replacing the supremum over the unit spheres $\mcS^{d_1},\ldots,\mcS^{d_p}$ with the supremum over the standard bases $\mcE^{d_k}=\{e_i\}_{i=1}^{d_k}$ for $1\le k\le p$. 
This completes the proof.
\end{proof}

\section{Entrywise maximum norm bound for sample moment tensor}

\begin{theorem}\label{thm:maximum_asymmetric}
  For any integer $p\ge 2$ and $1 \leq k \leq p$, let $X^{(k)}, X_1^{(k)}, \ldots, X_N^{(k)}$ be i.i.d. zero-mean Gaussian random vectors in $\R^{d_k}$ with covariance matrix $\Sigma^{(k)}$. Then,
\[
\E \bigg\|\frac{1}{N} \sum_{i=1}^N X_i^{(1)} \otimes \cdots \otimes X_i^{(p)}-\E X^{(1)} \otimes \cdots \otimes X^{(p)}\bigg\|_{\max} \lesssim_p \bigg(\prod_{k=1}^p\|\Sigma^{(k)}\|_{\max}^{1 / 2}\bigg)\mathscr{E}_N\big((\Sigma^{(k)})_{k=1}^p\big),
\]
where
\begin{align*}
\mathscr{E}_N\big((\Sigma^{(k)})_{k=1}^p\big):= \bigg(\frac{\sum_{k=1}^p r_{\max}(\Sigma^{(k)})}{N}\bigg)^{1/2}+\frac{1}{N}\prod_{k=1}^{p}\Big(r_{\max}(\Sigma^{(k)})+\log N\Big)^{1/2}.
\end{align*}
Moreover, the upper bound is sharp in the following two cases:
\begin{enumerate}
    \item \textit{Independent Components.} If $X^{(1)},\ldots, X^{(p)}, (X^{(1)}_i)_{i=1}^{N},\ldots,(X^{(p)}_i)_{i=1}^N$ are mutually independent, then
\[
\E \bigg\|\frac{1}{N} \sum_{i=1}^N X_i^{(1)} \otimes \cdots \otimes X_i^{(p)}-\E X^{(1)} \otimes \cdots \otimes X^{(p)}\bigg\|_{\max} \asymp_p\bigg(\prod_{k=1}^p\|\Sigma^{(k)}\|_{\max}^{1 / 2}\bigg)\mathscr{E}_N\big((\Sigma^{(k)})_{k=1}^p\big).
\]
\item \textit{Identical Components.} If $X^{(1)}=\cdots = X^{(p)}=X$ and $X_i^{(1)}=\cdots = X_i^{(p)}=X_i \,$ for all $1\le i\le N$, $\Sigma^{(1)}=\cdots=\Sigma^{(p)}=\Sigma,$ then
\begin{align*}
\E \bigg\|\frac{1}{N}\sum_{i=1}^{N} X_i^{\otimes p}-\E X^{\otimes p}\bigg\|_{\max} &\asymp_p \|\Sigma\|_{\max}^{p/2} \bigg(\sqrt{\frac{r_{\max}(\Sigma)}{N}}+\frac{(r_{\max}(\Sigma)+\log N)^{p/2}}{N}\bigg).
\end{align*}
Furthermore, since $(\log N)^{p/2}/N\lesssim_p 1/\sqrt{N}$, this bound simplifies to
\[
\E \bigg\|\frac{1}{N}\sum_{i=1}^{N} X_i^{\otimes p}-\E X^{\otimes p}\bigg\|_{\max}  \asymp_p \|\Sigma\|_{\max}^{p/2} \bigg(\sqrt{\frac{r_{\max}(\Sigma)}{N}}+\frac{r_{\max}(\Sigma)^{p/2}}{N}\bigg).
\]
\end{enumerate}
\end{theorem}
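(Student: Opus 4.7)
The plan is to adapt the proof of the operator-norm bound in \cite[Theorem 2.1]{chen2025sharp} to the entrywise maximum norm setting. The key structural advantage is that $\|\cdot\|_{\max}$ is defined via supremum over the finite set $\prod_k \mcE_k$ of standard basis vectors rather than the continuum $\prod_k \mcS^{d_k}$, so no $\epsilon$-net discretization is needed; the challenge shifts to replacing dimension-dependent union-bound losses by the intrinsic complexity $r_{\max}(\Sigma^{(k)})$.

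For the upper bound, I would first apply a symmetrization inequality to reduce to a Rademacher process, obtaining $\E\|\widehat T_S-T\|_{\max}\lesssim \E\sup_{v\in\prod_k\mcE_k}|\tfrac1N\sum_i\epsilon_i\prod_k\langle X_i^{(k)},v_k\rangle|$. For each fixed $v$, the summand $\prod_k\langle X^{(k)},v_k\rangle$ is a polynomial of Gaussians of degree $p$, hence sub-Weibull of order $2/p$, with variance proxy bounded by $\prod_k \Sigma^{(k)}_{v_k,v_k}$ and sub-Weibull norm bounded by $\prod_k(\Sigma^{(k)}_{v_k,v_k})^{1/2}$. A Bernstein-type moment inequality then gives a two-term estimate with a Gaussian/variance part and a heavy-tail part. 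Integrating the tail or taking $q$-th moments with $q\asymp\log(\prod_k d_k)$ and then carefully replacing the $\log d_k$ factors using $\E\|X^{(k)}\|_\infty^2 = r_{\max}(\Sigma^{(k)})\|\Sigma^{(k)}\|_{\max}$ yields the Gaussian contribution $(\sum_k r_{\max}(\Sigma^{(k)})/N)^{1/2}\cdot\prod_k\|\Sigma^{(k)}\|_{\max}^{1/2}$. The heavy-tail part requires a truncation argument: write $X^{(k)}=X^{(k)}\mathbbm{1}_{\|X^{(k)}\|_\infty\le\tau_k}+\text{residual}$, with $\tau_k\asymp\|\Sigma^{(k)}\|_{\max}^{1/2}(r_{\max}(\Sigma^{(k)})+\log N)^{1/2}$, chosen so the residual contributes $O(1/N)$ in probability, while the truncated part is sub-Gaussian and handled by a Hoeffding-type union bound at level $1/N$, producing the factor $\prod_k(r_{\max}(\Sigma^{(k)})+\log N)^{1/2}/N$.

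For the two sharpness claims, the lower bounds reduce to exhibiting an entry where the upper bound is matched up to constants. In the \emph{Independent Components} case, the entries of $\widehat T_S-T$ are sums of products of mutually independent Gaussians, so a direct variance computation yields the Gaussian-part lower bound via a standard anti-concentration (Paley–Zygmund) argument on the maximizing entry, while the heavy-tail part is attained at a coordinate realizing the typical sup-norm of the samples. In the \emph{Identical Components} case, one applies Isserlis's theorem to compute $\tvar((\widehat T_S)_{\ell,\dots,\ell})$ explicitly in terms of powers of $\Sigma_{\ell,\ell}$ and uses the same strategy as in the proof of \cite[Theorem 2.1]{al2025sharp}; the simplification then follows since $(\log N)^{p/2}/N\lesssim_p 1/\sqrt N$ absorbs the logarithmic term.

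The main obstacle, as in \cite{chen2025sharp}, is obtaining the factor $\prod_k(r_{\max}(\Sigma^{(k)})+\log N)^{1/2}$ rather than the weaker $(\log\prod_k d_k)^{p/2}$ that a naive union bound over $\prod_k\mcE_k$ would yield. The refinement depends on coupling the truncation level of each factor $\langle X^{(k)},v_k\rangle$ to the effective dimension $r_{\max}(\Sigma^{(k)})$, which controls the tail of $\|X^{(k)}\|_\infty$ in a dimension-free way, and on tracking the interaction between the $p$ factors so that the effective complexity multiplies rather than sums inside the $\log N$-inflated term. This factorization is the essential new ingredient needed beyond the sub-Weibull Bernstein step.
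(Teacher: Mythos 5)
There is a genuine gap in your upper-bound argument, at exactly the step you flag as ``the main obstacle.'' You propose to take $q$-th moments with $q\asymp\log(\prod_k d_k)$ (equivalently, union-bound over the $\prod_k d_k$ entries) and then ``replace the $\log d_k$ factors'' by $r_{\max}(\Sigma^{(k)})$. This substitution is invalid: for a Gaussian vector one always has $\E\|X^{(k)}\|_\infty\lesssim \|\Sigma^{(k)}\|_{\max}^{1/2}\sqrt{\log d_k}$, so $r_{\max}(\Sigma^{(k)})\lesssim \log d_k$ but not conversely --- $r_{\max}$ can be arbitrarily smaller (and remains finite when $d_k=\infty$, which is the whole point of a dimension-free bound). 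A union bound therefore produces a strictly weaker estimate that cannot be massaged into the stated one, and the same issue recurs in your truncation step, where the ``Hoeffding-type union bound at level $1/N$'' over all entries would again inject $\log(\prod_k d_k)$. Your closing paragraph correctly names the missing ingredient (getting the complexities of the $p$ factors to multiply inside the $\log N$-inflated term while staying dimension-free) but does not supply a mechanism for it, so the proof as proposed does not close.

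The paper's route avoids this entirely: it recasts $\|\widehat T_S-T\|_{\max}$ as a product empirical process indexed by the function classes $\F^{(k)}=\{\langle\cdot,v\rangle: v\in\bar{\mcE}\}$ and invokes the general generic-chaining bound \cite[Theorem 2.2]{chen2025sharp}, which is stated in terms of $d_{\psi_2}(\F^{(k)})$ and Talagrand's functional $\gamma(\F^{(k)},\psi_2)$. The only work then is the computation $d_{\psi_2}(\F^{(k)})\asymp\|\Sigma^{(k)}\|_{\max}^{1/2}$ and, via the majorizing measure theorem, $\gamma(\F^{(k)},\psi_2)\asymp\gamma(\bar{\mcE},d_{X^{(k)}})\asymp\E\|X^{(k)}\|_\infty$, which is precisely how $r_{\max}(\Sigma^{(k)})$ enters in place of $\log d_k$. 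If you want a self-contained argument along your lines, you would need to redo the multi-scale chaining of \cite{chen2025sharp} over the metric space $(\bar{\mcE},d_{X^{(k)}})$ rather than a single-scale union bound; symmetrization plus sub-Weibull Bernstein alone will not produce the theorem. Your treatment of the lower bounds (variance computation plus Paley--Zygmund on a maximizing entry, following \cite[Proposition 3.1]{chen2025sharp} and \cite[Proposition 3.1]{al2025sharp}) is consistent with what the paper does, which likewise defers the details.
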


\begin{remark}
The upper bound in Theorem~\ref{thm:maximum_asymmetric} holds without requiring independence between the sequences $(X^{(k)}_i)_{i=1}^{N}$ and $(X^{(k')}_i)_{i=1}^{N}$ for $k\ne k'$; that is, no assumptions are made on the correlation structure across components.  Moreover, the upper bound in Theorem~\ref{thm:maximum_asymmetric} extends directly to sub-Gaussian settings. 
Dimension-dependent counterparts can also be derived by applying standard $\varepsilon$-net arguments in conjunction with the $\alpha$-sub-exponential concentration inequality of \cite{gotze2021concentration}.
\end{remark}
\begin{proof}[Proof of Theorem~\ref{thm:maximum_asymmetric}]

\textit{Upper bound} 
By the definition of the entrywise maximum norm, we have
\begin{align*}
    &\E \bigg\|\frac{1}{N} \sum_{i=1}^N X_i^{(1)} \otimes \cdots \otimes X_i^{(p)}-\E X^{(1)} \otimes \cdots \otimes X^{(p)}\bigg\|_{\max}\\
    &=\E \max_{v_k\in \mcE^{d_k},1\le k\le p} \bigg|\frac{1}{N}\sum_{i=1}^{N}\prod_{k=1}^{p}\langle X^{(k)}_i,v_k\rangle -\E \prod_{k=1}^{p} \langle X^{(k)}, v_k \rangle\bigg|,
\end{align*}
where $\mcE^{d_k} = \{e_i\}_{i=1}^{d_k}$ denotes the standard basis of $\R^{d_k}$. Let $\bar{\mcE}^{d_k}:=\mcE^{d_k}\cup -\mcE^{d_k}$, and define $\F^{(k)}:=\{\langle\cdot, v\rangle: v \in \bar{\mcE}^{d_k} \}$ for $1\le k\le p$. Then the maximum can be upper bounded by
\[
\E \sup_{f^{(k)} \in \mathcal{F}^{(k)},1\le k\le p}\bigg|\frac{1}{N} \sum_{i=1}^N \prod_{k=1}^p f^{(k)} (X^{(k)}_i)-\mathbb{E} \prod_{k=1}^p f^{(k)} (X^{(k)})\bigg|.
\]
Applying \cite[Theorem 2.2]{chen2025sharp}, we obtain
\begin{align}\label{eq:aux1}
&\E \!\sup_{f^{(k)} \in \mathcal{F}^{(k)},1\le k\le p}\bigg|\frac{1}{N} \sum_{i=1}^N \prod_{k=1}^p f^{(k)} (X^{(k)}_i)-\mathbb{E} \prod_{k=1}^p f^{(k)} (X^{(k)})\bigg| \nonumber\\
&\lesssim_p  \bigg(\prod_{k=1}^{p} d_{\psi_2}(\F^{(k)})\bigg) \bigg(\frac{\sum_{k=1}^{p}\bar{\gamma}(\F^{(k)},\psi_2)}{\sqrt{N}}+\frac{\prod_{k=1}^{p}\big(\bar{\gamma}(\F^{(k)},\psi_2)+(\log N)^{1/2}\big)}{N}\bigg) ,
\end{align}
where $\bar{\gamma}(\F^{(k)},\psi_2):=\gamma(\F^{(k)},\psi_2)/d_{\psi_2}(\F^{(k)})$. Here, $\gamma(\F,\psi_2)$ denotes Talagrand's generic chaining complexity of the function class $\F$ \cite[Definition 2.7.3]{talagrand2022upper}, $d_{\psi_2}(\F) := \sup_{f \in \F}\|f\|_{\psi_2}$, and $\psi_2$ refers to the Orlicz norm with Orlicz function $\psi(x) = e^{x^2}-1,$ see e.g. \cite[Definition 2.5.6]{vershynin2018high}.

Since $X^{(k)}$ is Gaussian with covariance $\Sigma^{(k)}$, the $\psi_2$-norm of linear functionals is equivalent to the $L_2$-norm. Hence, \begin{align*}
d_{\psi_2}(\F^{(k)})=\sup_{f^{(k)}\in\mathcal{F}^{(k)}}\|f^{(k)}\|_{\psi_2}\asymp \sup_{f^{(k)}\in\mathcal{F}^{(k)}}\|f^{(k)}\|_{L_2}=\sup_{v\in\bar{\mcE}^{d_k}}\big(\E\langle X^{(k)},v\rangle^2\big)^{1/2}=\|\Sigma^{(k)}\|_{\max}^{1/2}.
\end{align*}
For the generic chaining term, note that the canonical metric on $\bar{\mcE}^{d_k}$ is given by
\[
d_{X^{(k)}}(u, v)
:= \big(\mathbb{E}(\langle X^{(k)}, u\rangle-\langle X^{(k)}, v\rangle)^2\big)^{1/2}=\langle u-v, \Sigma^{(k)} (u-v) \rangle^{1/2}
=\|\langle\cdot, u\rangle-\langle\cdot, v\rangle\|_{L_2(\mu^{(k)})},
\]
where $\mu^{(k)}$ is the law of $X^{(k)}$. By Talagrand's majorizing measure theorem \cite[Theorem 2.10.1]{talagrand2022upper},
\[
\gamma (\F^{(k)}, \psi_2)\asymp \gamma (\F^{(k)}, L_2)=\gamma(\bar{\mcE}^{d_k}, d_{X^{(k)}}) \asymp \mathbb{E} \sup_{ u \in \bar{\mcE}^{d_k}}\langle X^{(k)}, u\rangle = \E \|X^{(k)}\|_{\infty}.
\]
We conclude that
 \[
 d_{\psi_2}(\F^{(k)}) \asymp \|\Sigma^{(k)}\|_{\max}^{1/2},\quad \gamma(\F^{(k)},\psi_2)\asymp \E \|X^{(k)}\|_{\infty},
 \]
 and therefore,
 \[
\E \bigg\|\frac{1}{N} \sum_{i=1}^N X_i^{(1)} \otimes \cdots \otimes X_i^{(p)}-\E X^{(1)} \otimes \cdots \otimes X^{(p)}\bigg\|_{\max} \lesssim_p \bigg(\prod_{k=1}^p\|\Sigma^{(k)}\|_{\max}^{1 / 2}\bigg)\mathscr{E}_N\big((\Sigma^{(k)})_{k=1}^p\big),
\]
where
\begin{align*}
\mathscr{E}_N\big((\Sigma^{(k)})_{k=1}^p\big):= \bigg(\frac{\sum_{k=1}^p r_{\max}(\Sigma^{(k)})}{N}\bigg)^{1/2}+\frac{1}{N}\prod_{k=1}^{p}\Big(r_{\max}(\Sigma^{(k)})+\log N\Big)^{1/2}
\end{align*}
and $r_{\max}(\Sigma^{(k)}) := \left( \E_{X^{(k)} \sim \mcN(0, \Sigma^{(k)})} \|X^{(k)}\|_\infty \right)^2/\|\Sigma^{(k)}\|_{\max}$.

\vspace{1em}
\textit{Lower bound}  The lower bounds follow from straightforward modifications of the argument in \cite[Proposition 3.1]{chen2025sharp} and \cite[Proposition 3.1]{al2025sharp}, respectively, by replacing the operator norm with the entrywise maximum norm. We omit the details for brevity.
\end{proof}

\section*{Acknowledgments}
The work of DSA was partly funded by NSF CAREER DMS-2237628. The work of OAG was supported in part by funding from the Eric and Wendy Schmidt Center at the Broad Institute of MIT and Harvard.

\bibliographystyle{plain}
\bibliography{references}

\end{document}